\documentclass[reqno,oneside]{amsart}         
\usepackage{mathrsfs}    
\usepackage{verbatim}
\usepackage[backref,colorlinks]{hyperref}  
\usepackage{cite}

\usepackage{subcaption}
\usepackage[english]{babel}
\usepackage[utf8]{inputenc}
\usepackage{amsmath,amssymb}
\usepackage{algorithm}
\usepackage{algpseudocode}
\usepackage{geometry}
\usepackage{xspace}

\usepackage{amsmath,amsfonts,amssymb,amsopn,amsthm}
\usepackage{enumerate}
\theoremstyle{plain}
\usepackage{standalone}
\usepackage{array}

\newtheorem{theorem}{Theorem}
\newtheorem{lemma}[theorem]{Lemma}

\newtheorem{claim}[theorem]{Claim}

\newtheorem{definition}{Definition}

\title{Coloring Grids Avoiding Bicolored Paths}

\keywords{star coloring, acyclic coloring, bicolored path, grid, graph product, $P_k$-coloring}

\author[Derman Keskinkılı\c{c}]{Derman Keskinkılı\c{c}}
\address{Hacettepe University, Department of Computer Engineering, Beytepe 06810 Ankara, Turkey.}
\email{dermanakgol@gmail.com}

\author[L. \"Ozkahya]{Lale \"Ozkahya}
\address{Hacettepe University, Department of Computer Engineering, Beytepe 06810 Ankara, Turkey.}
\email{ozkahya@cs.hacettepe.edu.tr}

\begin{document}

\begin{abstract}
The star chromatic number of a graph 
is the minimum number of colors in a proper 
vertex coloring forbidding any $P_4$ with two colors (bicolored). This problem was introduced by Gr\"unbaum (1973) together with the acyclic coloring of graphs, where bicolored cycles are avoided.   
In this paper, we study a generalization of this problem, by considering proper vertex coloring on graphs forbidding bicolored  paths of a fixed length, which was initially discussed by Alon, McDiarmid, and Reed (1991). Here, we study this problem on products of two paths. 
We show that at least 4 colors are needed to properly color the  product of paths, $P_m\square P_n$, avoiding a bicolored $P_k,$ unless $n<k-2$ or $m<k-2.$ With this result, the above question is settled for all $k$ on 2-dimensional grids. 
\end{abstract}



\maketitle

\section{Introduction} \label{sec:introduction}

For a graph $G$, $\chi(G)$ denotes the {\it chromatic number} of $G$, that is, the minimum number of colors needed to color $V(G)$ without a monochromatic edge. 
The {\it star coloring} problem on a graph $G$ 
asks to find the minimum number of colors in a proper 
coloring forbidding a bicolored (2-colored) $P_4,$ called the star-chromatic 
number $\chi_s(G)$. This problem was introduced by Gr\"unbaum \cite{grunbaum1973acyclic}, who proved that a graph with maximum degree 3 has an acyclic coloring with 4 colors. 
An {\it acyclic coloring} of a graph $G$ was introduced in~\cite{grunbaum1973acyclic}, as the proper coloring of $G$ 
not having any bicolored cycle. The minimum number of colors needed in an acyclic coloring of a graph $G$ is called the {\it acyclic chromatic number} of $G$, denoted by $a(G)$, where $a(G)\leq \chi_s(G).$ Both the star coloring and 
acyclic coloring problems were shown to be NP-complete by Albertson et al.~\cite{albertson2004coloring} and Kostochka~\cite{kostochka1978upper}, respectively. 

The star and acyclic coloring problems are also related to the proper coloring of $G^2$, the square of a graph $G$, 
where $V(G^2)=V(G)$ and any two vertices are adjacent in $G^2$ 
 if and only if they are at distance at most two in G. 
In that sense, every proper coloring of $G^2$ is a star and acyclic coloring of $G.$ 
We study a variation of these problems avoiding bicolored paths. We call a proper vertex 
coloring of a graph $G$ without a bicolored copy of $P_k$ 
a {\it $P_k$-coloring} of $G$ for $k\geq 3$.  
The minimum number of colors needed for a $P_k$-coloring 
of $G$ is called {\it $P_k$-chromatic number} of $G,$ 
denoted by $s_k(G).$ We note that, for $k\geq 4$, 
\[
\chi(G) \leq s_k(G) \leq s_4(G) = \chi_s(G)\leq \chi(G^2)=s_3(G).
\]

Another relevant problem to $P_k$-coloring is nonrepetitive graph coloring. 
For a path $P=(v_1,\dots,v_k,v_{k+1},\dots,v_{2k}),$ we say $P$ a {\it repetitively} colored if the color of $v_i$ and $v_{k+i}$ are the same for each $i$, $1\leq i\leq k.$ A coloring of a graph $G$, where no path with even vertices is repetitively colored, is called a {\it nonrepetitive coloring} of $G.$ The {\it (path-)nonrepetitive chromatic number}, $\pi(G)$, is the minimum
integer $k$ such that $G$ admits a nonrepetitive $k$-coloring.
Since the seminal result by Alon et al. \cite{alon2002nonrepetitive}, nonrepetitive colorings of graphs have been widely studied. Wood \cite{wood2021nonrepetitive} provides an extensive survey on this coloring. 
The $P_k$ coloring problem, $k\geq 4$, is closely related to this problem as  
\[
s_k(G) \leq  \chi_s(G)\leq \pi(G).
\]
The $P_k$-coloring problem was posed by Alon, McDiarmid, and Reed \cite{alon1991acyclic} who showed that for any graph $G$ with maximum degree $d,$ $a(G)= O(d^\frac{4}{3})$, and claimed  that a similar upper bound can be shown for $P_k$-chromatic number.  
This coloring parameter is studied for paths of even order in \cite{esperet2013acyclic} and 
later for all paths in \cite{hou2020coloring, kirticsouglu2024coloring}. 

In this paper, we consider the $P_k$-coloring of 2-dimensional grid graphs, $P_m\square P_n$. Graph products have been studied widely for the problems listed above. In \cite{fertin2003acyclic}, Fertin, Godard and Raspaud provide both lower and upper bound for the acyclic chromatic number of $d$-dimensional grid, that is product of $d$ paths. In \cite{jamison2008acycliccycles}, Jamison and Matthews find the acyclic chromatic number of $P_m\square C_n$ (cylinder) and $C_m\square C_n$ (torus) as at most 4 and 5, respectively. Borodin \cite{borodin1979acyclic} showed that every planar graph has an acyclic 5-coloring. Alon, Mohar, and Sanders \cite{alon1996acyclic} showed that each projective plane graph has an acyclic 7-coloring. Acyclic coloring of products of graphs are extensively studied also in \cite{jamison2006acyclic, jamison2008acyclic}. 
In all these results, we observe that the difference between the chromatic number and acyclic chromatic number is a constant value, independent from the length of the cycles that are avoided to be bicolored. Similarly, 
the most recent result on nonrepetitive coloring of grids is studied in \cite{tao2026nonrepetitive}, showing that 
$5\leq \pi(P_n\square P_n)\leq 12$, for sufficiently large $n$. 
There are studies showing constant (upper and lower, resp.) bounds on the nonrepetitive chromatic number for graph families, such as paths (3,3) \cite{thue1906uber}, cycles (3,4) \cite{currie2002there}, outerplanar graphs (7, 12) \cite{barat2007square,barat2008square,kundgen2008nonrepetitive} and planar graphs (11,768) \cite{dujmovic2013nonrepetitive,dujmovic2020planar}. 
As in these examples, our result in Theorem~\ref{thm_sk} shows that the $P_k$-chromatic number of the grid increases by a constant with respect to its chromatic number, independent of the value of $k$.
\begin{theorem}\label{thm_sk}
For any $k\geq 5$ and $m, n\geq k-2,$ $s_k(P_m\square P_n)=4$. 
\end{theorem}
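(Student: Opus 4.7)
The proof separates into an upper bound $s_k(P_m\square P_n)\leq 4$ and a lower bound $s_k(P_m\square P_n)\geq 4$, with the latter being the main content of the theorem.

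\textbf{Upper bound.} My plan is to exhibit an explicit 4-coloring whose bicolored components all have at most four vertices, so no bicolored $P_5$ (and hence no bicolored $P_k$ for $k\geq 5$) can arise. A natural candidate is the period-4 coloring in which row $j$ (taken modulo $4$) follows the cyclic patterns $1,2,3,4$; $3,4,1,2$; $2,1,4,3$; $4,3,2,1$, extended periodically in $i$. Properness follows from a short case check on the parities of $i$ and $j$. For each of the $\binom{4}{2}=6$ color pairs $\{a,b\}$, I would verify that the induced bipartite subgraph on $c^{-1}(a)\cup c^{-1}(b)$ consists only of isolated vertices, isolated edges, and 4-cycles, yielding components of size at most four. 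This gives the upper bound uniformly in $k\geq 5$.

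\textbf{Lower bound.} Assume toward contradiction that $c\colon V(P_m\square P_n)\to\{1,2,3\}$ is a proper 3-coloring with no bicolored $P_k$, where $m,n\geq k-2$. Restrict attention to a $(k-2)\times(k-2)$ sub-grid. Any L-shaped path joining two opposite corners (a horizontal segment of length $k-2$ followed by a vertical segment of length $k-2$, sharing one vertex) has $2(k-2)-1 = 2k-5 \geq k$ vertices, and the same holds for any monotone staircase path from $(0,0)$ to $(k-3,k-3)$. Producing a bicolored instance of such a path therefore finishes the proof. The plan is a case analysis starting from a corner: trace the longest bicolored initial segment along the top row, and show that at the first position where a third color appears, one can always turn into the column below and continue with the same two colors, by carefully exploiting the constraints imposed by proper 3-coloring on the adjacent rows and columns.

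\textbf{Main obstacle.} The hardest regime is the one in which all three colors are densely intermixed, so that no long row or column is bicolored and no monotone path alone suffices; small examples already show that the bicolored $P_k$ may need to take an irregular zigzag shape rather than an L-shape or staircase. I expect this case to require a delicate structural analysis of the 3-coloring, based on how the vertices of the third color partition the grid into smaller regions together with a pigeonhole or Ramsey-style argument on the bipartition classes of the grid, showing that at least one of these regions must contain a bicolored path of length $k$.
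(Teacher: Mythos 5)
There is a genuine gap, and it sits exactly where the theorem's content lies. Your upper bound is fine in spirit (and in the paper it is simply inherited from the known result $s_5(P_m\square P_n)=4$ together with monotonicity of $s_k$ in $k$, so no new coloring needs to be checked), but your lower bound is not a proof. The step ``at the first position where a third color appears, one can always turn into the column below and continue with the same two colors'' is unjustified and is false in general: a proper 3-coloring places no constraint forcing the continuation below the turning point to reuse the same color pair, and easy local configurations (e.g.\ a column whose next two entries use the third color alternating with one of the first two) block any such greedy L-shape or staircase extension. Your closing paragraph concedes that the ``densely intermixed'' situation is the hard case, but that case is not an edge case to be patched later -- it is the whole difficulty -- and the suggested pigeonhole/Ramsey-style argument is not developed to the point where one could check it; as stated it does not identify which regions to count, what is being pigeonholed, or why a long bicolored path must appear in one of them.

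For comparison, the paper's lower-bound argument does not try to grow a bicolored path greedily at all. It works with \emph{bicolored components} (maximal connected two-colored subgraphs) of a hypothetical 3-coloring of $P_{k-2}\square P_{k-2}$. Lemma \ref{lem-angle} establishes a rigid structure for the boundary walk of such a component (alternating $90^{\circ}$ turns, odd length) and, crucially, produces an adjacent bicolored connected subgraph in a \emph{different} color pair along that boundary. Iterating the passage $C\mapsto D_C$, the region $R_C$ strictly shrinks, so Claim \ref{clm-fin} forces the existence of a \emph{peripheral} component, i.e.\ one joining two opposite sides of the grid. Lemma \ref{lem-case1} then converts a peripheral component into a bicolored $P_k$ through a parity case analysis on a maximal bicolored path with at most one column jump, sometimes finding the long path in a different color pair among the neighboring columns. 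If you want to salvage your approach, you would need a replacement for both of these mechanisms: a reason a long two-colored structure must reach across the grid, and a way to extract a $P_k$ from it even when the obvious path inside it is too short.
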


In \cite{kirticsouglu2024coloring}, K\i rt\i \c{s}o\u{g}lu and the second author showed that 
$s_k(P_{k-3}\square P_n)=3$ 
for all $k\geq 5$ and $n\geq 1,$ by 
providing the following colorings for the case $k=5,6,$ 
which can be generalized to all $k\geq 5.$
\small
\begin{equation*}\label{eq:sk_G3m}
\begin{matrix}
1& 2 & 3& 1& 2 & 3&...\\
2 & 3 & 1& 2 & 3 & 1&...\\
\end{matrix}\qquad \qquad \qquad
\begin{matrix}
1 & 2 & 3 & 1 & 2 & 3 & \dots\\
2 & 3 & 1 & 2 & 3 & 1 & \dots\\
1 & 2 & 3 & 1 & 2 & 3 & \dots\\
\end{matrix}
\end{equation*} 
\normalsize
This coloring pattern having  
columns with alternating colors from $(1,2),$ $(2,3),$ $(3,1),$ respectively, yields a valid 3-coloring for any $k\geq 6$ and $n\geq 1$, showing $s_k(P_{k-3}\square P_n)= 3$. 
Note that in such colorings, a bicolored $P_k$ has to have at least $k-2$ vertices in the same column, thus cannot be found in $P_{k-3}\square P_n$ colored according to the pattern above. 
In \cite{kirticsouglu2024coloring}, it is also observed that for $k=5,6$, $s_k(P_{k-2}\square P_n)=4$ for all $n \geq k-2$ and this is conjectured to hold 
for all $k.$ By the monotonicity of $s_k(G)$ with respect to $k$, this result implies, for any $k\geq 5$ and $m, n\geq k-2,$ $s_k(P_m\square P_n)\leq 4$. With Theorem~\ref{thm_sk}, we confirm this conjecture showing that there is no proper 3-coloring of $P_m\square P_n$ avoiding a bicolored $P_k$, for $m,n\geq k-2$. In the following section, we provide the proof of Theorem~\ref{thm_sk}.

 \section{Main Result}

We call a maximal connected subgraph induced by vertices having only two colors a {\it bicolored component.} In particular, a bicolored component that has colors $c_1$ and $c_2$ is called a $c_1$-$c_2$ {\it colored component}. To prove Theorem~\ref{thm_sk}, we analyze bicolored components containing vertices from one of the sides of the grid. These bicolored components belong to one of the groups below: 
\begin{itemize}
    \item {\it peripheral bicolored component:} a bicolored component that has vertices in two opposite sides of the grid, i.e., top and bottom sides, or left and right sides.
    \item  {\it partial bicolored component:} a bicolored component that is not peripheral, but has vertices on at least one of the sides of the grid. 
\end{itemize}
In the rest, we assume that the sides of the grid that a partial bicolored component may intersect are top and left sides, since remaining cases are symmetric. We categorize each partial bicolored component $C$ as:
\begin{itemize}
    \item {\it Type 1:} if the vertices of $C$ are only on the top side,
    \item {\it Type 2:} if the vertices of $C$ are on the top and left sides.
\end{itemize}
We see examples of Type 1 and Type 2 partial bicolored components (as red-blue colored components) in Figure~\ref{fig:bdry} (a), and in Figure~\ref{fig:bdry} (b),(c), respectively. The following definitions are associated with a (partial or peripheral) bicolored component $C$.  
\begin{definition} When considering a bicolored component $C$ as a planar subgraph in the grid drawing, the {\it boundary of C} is a walk traversing the outer face of $C$, starting at any vertex, in clockwise direction.  
\end{definition}
For example, in Figure~\ref{fig:bdry} (c), the boundary of $C$ is the walk $(s, a, b, a, d, e, t, e, d, a, s).$ In Figure~\ref{fig:bdry} (b),  the boundary of $C$ is the walk $(s,a,b,c,d,c,t,c,e,a,s).$ In the rest, we make use of the term {\it outside of the grid} in the following sense. 
\begin{definition}\label{def:outside_grid}
 When walking clockwise on the boundary of $C$, if one's left hand stays outside the grid along some segment $S$ of that walk, we say $S$ is outside the grid. It is possible that a segment outside the grid is a single vertex.     
\end{definition}
For example, in Figure~\ref{fig:bdry} (c), the segment $(t, e, d, a, s)$ of the boundary  $(s, a, b, a, d, e, t, e, d, a, s)$ is outside the grid. 
\begin{definition}\label{defn:BC}
Given a fixed configuration of the grid, {\it a partial walk $B_C$} is a maximal subwalk of the boundary of a bicolored component $C$ without any segment outside of the grid. In addition, if $C$ is a partial bicolored component of Type-1 or Type-2, we let the  starting vertex of $B_C$ be the rightmost vertex of $C$ on the top side of the grid in order to make the partial walk $B_C$ of $C$ unique for such $C.$ 
\end{definition} 
By Definition~\ref{defn:BC}, on a fixed configuration of the grid, it is possible to have more than one partial walk on the boundary of $C$ only if $C$ is a peripheral bicolored component.
Some examples for the partial walks are shown in Figure~\ref{fig:bdry} (a), (b) and (c), where $B_C$ is described by the vertex sequence $(s, a, b, a, c, a, d, a, s)$, $(s, a, b, c, d, c, t)$, $(s, a, b, a, d, e, t)$, respectively. In Figure~\ref{fig:bdry} (a), the boundary of $C$ happens to be the same as $B_C.$ In Figure~\ref{fig:bdry} (d), the red-blue colored peripheral component containing $z_1$ has only one partial walk $(y_6, y_5, x_5, y_5, y_4, y_3, x_3, y_3, y_2, y_1, x_1).$ The green-blue colored peripheral component containing  $w_1$, has several partial walks, for example $(v_6, v_5, u_5)$ and $(w_1, w_2, x_2, w_2, w_3, w_4, x_4, w_4, w_5, v_5, v_6).$ The partial red-blue colored component containing $v_1$ has only one partial walk $(v_1, v_2, w_2, v_2, v_3, v_4, w_4,$ $ v_4, v_5, v_4, u_4).$


 In the rest, 
 for simplicity, we write {\it 3-coloring} to refer to a proper 3-coloring of $P_{k-2}\square P_{k-2}$, $k\geq 5$, using colors red, blue and green, unless indicated otherwise. 
 With Lemma~\ref{lem-angle}, we make a generalization about the boundary structure of bicolored components.   
  To clarify the definition of the angles along $B_C$, we first present some examples. In Figure~\ref{fig:bdry} (a), the angles between the consecutive edges along $B_C$ are alternating between $90^{\circ}$ and $360^{\circ},$ whereas in Figure~\ref{fig:bdry} (c), these angles from $s$ to $t$ are $90^{\circ},$ $360^{\circ}$, $90^{\circ}$, $180^{\circ}$ and $90^{\circ}$. 
  In Figure~\ref{fig:bdry} (d), for the red-blue colored component with $z_1$ in it, these angles along the only partial walk (starting at $y_6$ ending at $x_1$) follow as $90^{\circ},$ $360^{\circ}$, $90^{\circ}$, $180^{\circ}$, $90^{\circ}$, $360^{\circ}$, $90^{\circ}, 180^{\circ}, 90^{\circ}.$ 
\begin{lemma}\label{lem-angle}
For any $k\geq 5$, let $C$ be a (partial or peripheral) bicolored component in a 3-coloring $c: V(P_{k-2}\square P_{k-2})\to \{c_1,c_2,c_3\},$ and label the vertices along a partial walk $B_C$ as $v_1, v_2, \dots, v_r$. 
Then, the following hold:
\begin{enumerate}
    \item $r\geq 3$. 
    \item The angles between the edges $v_1v_2$ and $v_2v_3$ and between $v_{r-2}v_{r-1}$ and $v_{r-1}v_r$ are $90^{\circ}$.  
    \item For $i\leq r-2$, the angle between the edges $v_iv_{i+1}$ and $v_{i+1}v_{i+2}$ along $B_C$ is $90^{\circ}$ if and only if $i$ is odd. 
    \item $r$ is an odd integer.
    \item 
    Let $c(v_1)=c_1$ and $C$ have colors $c_1$ and $c_2.$ Then, 
    there is a $c_1$-$c_3$ colored connected subgraph containing the vertices $v_i$ and $u_{(i+1)/2}$, for odd $i$, $1\leq i\leq r-2,$ where $u_{(i+1)/2}$ is the $c_3$-colored vertex on the $C_4$ containing $\{v_i, v_{i+1},v_{i+2}\}.$
\end{enumerate}
\end{lemma}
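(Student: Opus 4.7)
The plan is to exploit three structural features of the partial walk $B_C$: the bipartiteness of $C$ (so colors alternate along $B_C$, with $c(v_i)=c_1$ iff $i$ is odd), the clockwise orientation of $B_C$ with the outer face on the left, and the fact that every grid-neighbor of a vertex of $C$ is either in $C$ or colored $c_3$ (since any $c_1$- or $c_2$-colored neighbor of $C$ joins $C$). Together these force rigid local configurations at each vertex of $B_C$.

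For (1) and (2), I first observe that, since $v_1$ sits on a grid-boundary side (the top, after symmetry), the clockwise orientation forces $v_1v_2$ to step into the interior of the grid. For the angle at $v_2$, suppose for contradiction that it is $180^{\circ}$, so that $v_3$ lies directly beyond $v_2$ along the same line. Let $w_1$ be the grid-neighbor of $v_1$ on the outside-$C$ side (for instance, east of $v_1$ when $v_1$ is the rightmost top vertex): since $w_1\notin C$ and is a grid-neighbor of the $c_1$-vertex $v_1$, we have $c(w_1)=c_3$. The corresponding grid-neighbor $w_2$ of $v_2$ is then adjacent to both $w_1$ (color $c_3$) and $v_2$ (color $c_2$), forcing $c(w_2)=c_1$; but then $w_2$ is a $c_1$-neighbor of $v_2\in C$, so $w_2\in C$, contradicting the assumed straight walk through $v_2$. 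A symmetric argument at the other end gives the $90^{\circ}$ angle at $v_{r-1}$. For (1), the possibility $r\le 2$ can be excluded by observing that a component too short to admit a proper interior $v_2$ forces two adjacent grid cells to both be colored $c_3$, which violates properness.

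For (3), I induct on $i$, tracking at each odd $i$ the $c_3$-vertex $u_{(i+1)/2}$ completing the $C_4$ through $v_i,v_{i+1},v_{i+2}$. Given a $90^{\circ}$ angle at $v_{i+1}$, the colors along the walk together with $c(u_{(i+1)/2})=c_3$ propagate to fix the colors of the cells adjacent to $v_{i+2}$; this third-color propagation, in the same spirit as the one used in (2), leaves a $180^{\circ}$ angle at $v_{i+2}$ (index $i+1$ even) as the only configuration consistent with properness and the clockwise orientation of $B_C$. An analogous forcing then returns the angle to $90^{\circ}$ at $v_{i+3}$ (index $i+2$ odd). Part (4) is then immediate from (2) and (3): since the angle at $v_{r-1}$ (index $i=r-2$) is $90^{\circ}$, $r-2$ must be odd, hence $r$ is odd.

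Part (5) is essentially automatic from the definitions: at each odd $i$ with $90^{\circ}$ angle at $v_{i+1}$, the vertex $u_{(i+1)/2}$ is by construction the fourth corner of the $C_4$ through $v_i,v_{i+1},v_{i+2}$, hence adjacent to $v_i$ in the grid; since $v_i$ is colored $c_1$ and $u_{(i+1)/2}$ is colored $c_3$, the single edge $v_iu_{(i+1)/2}$ is a $c_1$-$c_3$ colored connected subgraph containing both vertices. The main obstacle I expect is the bookkeeping of boundary cases in (2) and (3): when $v_{i+1}$ or one of the witness vertices ($w_1$, $w_2$, $u_{(i+1)/2}$, etc.) lies on the top or left side of the grid, some neighbors used in the color-forcing are unavailable, so the argument must be replayed using the Type 1 vs.\ Type 2 classification of $C$ and the requirement that $B_C$ avoid outside-grid segments.
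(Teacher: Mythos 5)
Your overall strategy (color-forcing via the third color at neighbors of the walk, as in the paper) is the right one, but the execution has a genuine gap in part (3), which is the heart of the lemma. You claim that after a $90^{\circ}$ angle at $v_{i+1}$ the forcing ``leaves a $180^{\circ}$ angle at $v_{i+2}$ \ldots as the only configuration,'' i.e.\ that the angle at every even index is $180^{\circ}$. That is false: the angle at an even index can also be $270^{\circ}$ or $360^{\circ}$ (the paper's own examples have walks whose angles alternate $90^{\circ}, 360^{\circ}, 90^{\circ}, \dots$, e.g.\ a vertex of $C$ with several pendant neighbors, and $360^{\circ}$ occurs exactly when the walk backtracks along a pendant edge). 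What actually has to be proved is only that the even-index angle is \emph{not} $90^{\circ}$ and the next one \emph{is}, and the paper does this by an explicit case analysis on consecutive angle pairs $(\beta,\gamma)$, ruling out $(180^{\circ},180^{\circ})$, $(270^{\circ},180^{\circ})$, $(270^{\circ},270^{\circ})$, $(360^{\circ},180^{\circ})$, $(360^{\circ},270^{\circ})$, $(360^{\circ},360^{\circ})$ via two adjacent $c_3$-vertices, $(360^{\circ},360^{\circ})$ via (1), and $(90^{\circ},90^{\circ})$ via the definition of the boundary walk. Your induction, as stated, both asserts a false intermediate claim and omits this case analysis, so the inductive step does not go through (in particular you never handle the situation where the even-index angle is $360^{\circ}$ or $270^{\circ}$).

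Smaller but real issues: in (2) you only exclude the value $180^{\circ}$ at $v_2$, while $270^{\circ}$ and $360^{\circ}$ must also be excluded (your forcing idea does extend, but you do not say so), and your opening claim that the clockwise orientation forces $v_1v_2$ into the interior is wrong — $v_1v_2$ may be horizontal along the top side, a case the paper treats separately, as well as the corner case of a peripheral component starting at the top-right corner, which you defer. In (5) you verify only that each single edge $v_iu_{(i+1)/2}$ is bicolored; the content of the statement is that \emph{one} connected $c_1$-$c_3$ subgraph contains all the $v_i$ (odd $i$) and all the $u_{(i+1)/2}$, which follows because $u_{(i+1)/2}$ is adjacent to both $v_i$ and $v_{i+2}$ (and one must also argue that this fourth vertex of the $C_4$ is indeed colored $c_3$, i.e.\ lies outside $C$); this global connectivity is exactly what the proof of the theorem later relies on, so it cannot be dropped.
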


\begin{figure}[htbp]
    \centering
    \vspace{-.2cm}
    \includegraphics[scale=0.3]{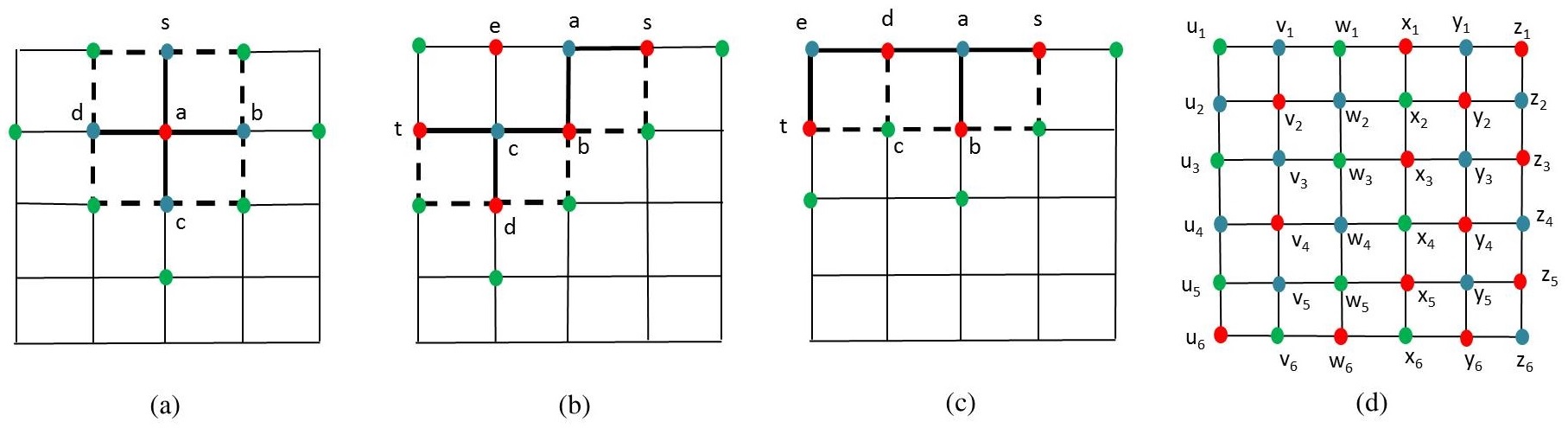}
    \vspace{-.2cm}
    \caption{In (a)-(c), examples of bicolored components show the edges of $B_C$ (bold edges) and the neighboring edges of another bicolored component (dashed edges). In (d), a coloring of the grid shows examples of peripheral bicolored components.}
    \label{fig:bdry}
    \vspace{-.2cm}
\end{figure}

\begin{proof}  
For simplicity, let us consider the 3-coloring described above, using colors red, blue and green and let $C$ be colored with red and blue.
Throughout the proof, we often make use of the fact that 
whenever a vertex on  the partial walk $B_C$ has a neighbor outside $C$, 
this neighbor has color green. 

\noindent
{\it (1)}  Since $C$ is bicolored, it has at least one edge. 
$C$ cannot be only a single edge, otherwise there are green colored adjacent vertices. 
For example, in Figure~\ref{fig:bdry} (d), if $y_1z_1$ is the only edge in its bicolored component, then $y_2$ and $z_2$ would be green. 
Thus, $C$ has at least two edges, the smallest case for $C$ having only the two edges incident to the top left corner vertex. Thus, $r\geq 3.$ 

\noindent
{\it (2)} Let the angle between $v_1v_2$ and $v_2v_3$ be $\theta$ and assume that $v_1v_2$ is a vertical edge, as in Figure~\ref{fig:bdry} (a). 

If $C$ is a partial bicolored component, recall that, $v_1$ is not part of the right side of the grid. Thus, in case $\theta$ has one of the values $180^{\circ}$,  $270^{\circ}$ or $360^{\circ}$, by definition of $B_C$, the neighbors of $v_1$ and $v_2$ to their right  are colored green and are adjacent, a contradiction. For example, in Figure~\ref{fig:bdry} (a), if $B_C$ were $(s,a,c,a,d,a,s)$, then $b$ had color green, a contradiction with proper coloring of the grid. 

If $C$ is a peripheral bicolored component, assume that $v_1$ is on the top side of the grid and $\theta\neq 90^{\circ}$. If $v_1$ is not on the top right corner of the grid, then same arguments above hold. Otherwise, $v_1v_2$ is a vertical edge on the right side of the grid. However, this is not possible, as by our assumption, $B_C$ is a clockwise walk and does not contain a segment outside the grid. 
Hence, $\theta=90^{\circ}.$

If  $v_1v_2$ is a horizontal edge as in Figure~\ref{fig:bdry} (b) and (c), similar arguments as above show that $\theta=90^{\circ}.$ 
The angle between the edges $v_{r-2}v_{r-1}$ and $v_{r-1}v_r$ cannot be other than $90^{\circ}$. 

\noindent
{\it (3)} Let $w, x, y, z$ represent the vertices $v_i, v_{i+1}, v_{i+2}, v_{i+3}$, respectively, for some $i.$ 
Call the angle between the edge pairs  $(wx, xy)$ and $(xy, yz)$, $\beta$ and $\gamma,$ respectively. In the following, we discuss invalid values for $(\beta, \gamma)$, letting $w$ have color red. We omit the symmetric cases, where the listed pairs of angles appear in reverse order. All cases are illustrated in Figure~\ref{fig:Case2b}. 

\begin{figure}[htbp]
\centering
\includegraphics[width=0.8\linewidth]{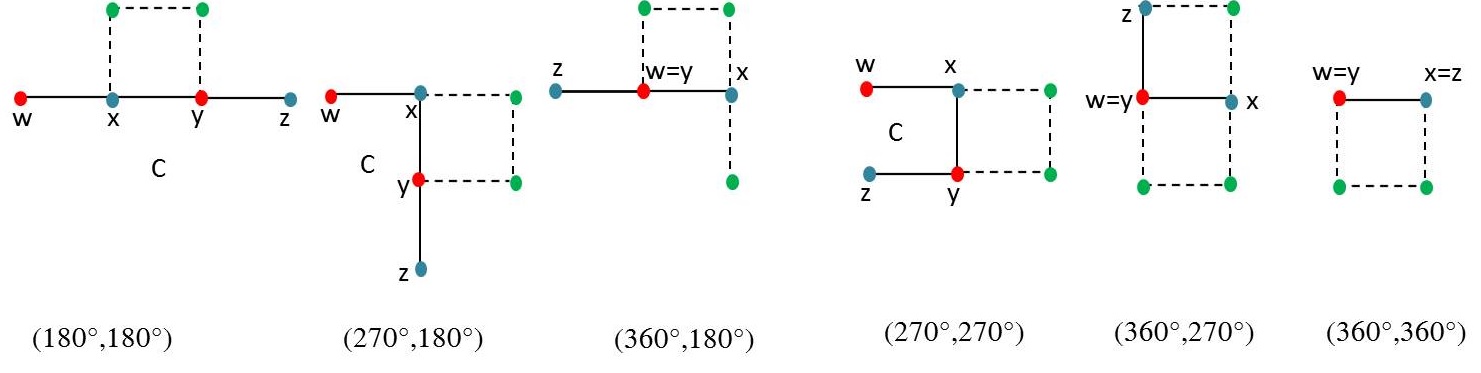}
  \vspace{-.2cm}
\caption{Invalid cases listing values of $(\beta,\gamma)$. 
}
\label{fig:Case2b}
\vspace{-.3cm}
\end{figure}

\begin{itemize}
    \item $(\beta, \gamma)=(180^{\circ},180^{\circ}), (270^{\circ},180^{\circ}), (270^{\circ},270^{\circ})$: As $B_C$ has no segment outside the grid, $x$ and $y$ have neighbors outside $C$ that are adjacent and green. 
     \item $(\beta, \gamma)=(360^{\circ},180^{\circ}),(360^{\circ},270^{\circ})$: In each of these cases, we use the fact that $x$ is not on any side of the grid, as in this ordering, $x$ cannot be $v_1$ or $v_r$ (endvertices of $B_C$). Thus, one of the $C_4$'s containing the edge $xy$, shown in Figure~\ref{fig:Case2b}, has two adjacent vertices that are not in $C$, hence green, a contradiction.  
    \item  $(\beta, \gamma)=(360^{\circ},360^{\circ})$: This case is not possible as it implies that $C$ consists of a single edge contradicting (1).
\end{itemize}
Thus, we observe that either $\beta$ or $\gamma$ is $90^{\circ}$ as listed in Figure~\ref{fig:Case2a}, where 
the vertices marked with * are implied to be in the bicolored component $C$ to have a proper coloring. In Figure~\ref{fig:Case2a} the case $(\beta, \gamma)=(90^{\circ},90^{\circ})$ gives a contradiction since the segment of $B_C$ should not be $(w,x,y,z)$ but $(w,z)$. Thus, (3) follows from (2).
\begin{figure}[htbp]
\centering
    \includegraphics[scale=0.3]{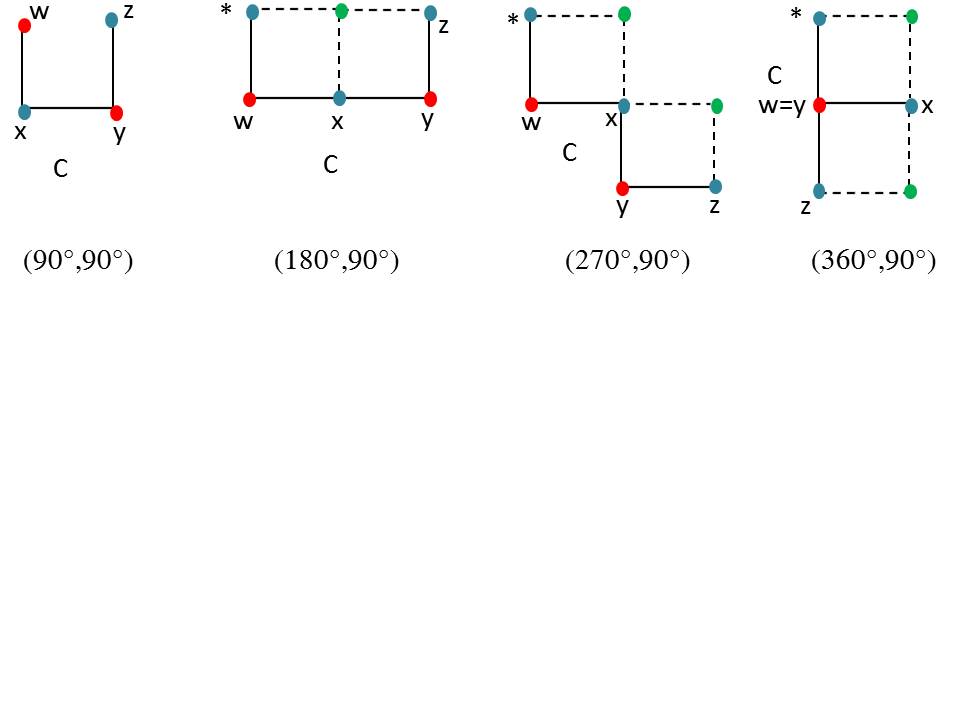} 
      \vspace{-3.6cm}
\caption{The only valid configurations for $(\beta,\gamma)$, except the leftmost case. The vertices marked with * are part of $C.$ (symmetric cases are omitted, where the listed pairs of angles appear in reverse order.)}
\label{fig:Case2a}
\end{figure}

\noindent
{\it (4)} This is implied by (2) and (3). 

\noindent
{\it (5)} Let $u_{(i+1)/2}$ be the green vertex on the $C_4$ containing $\{v_i, v_{i+1},v_{i+2}\},$ for each odd $i$, $1\leq i\leq r-2$. Note that $u_{(i+1)/2}$ is not on $B_C$, since two consecutive degrees along $B_C$ cannot be both $90^{\circ}$. Also, vertices $u_{(i+1)/2}$ are not necessarily distinct, for example, in Figure~\ref{fig:bdry} (c), $u_2=u_3=c$. As given in the statement, let $c_1$ be the color of $v_1$ and let $C$ be $c_1$-$c_2$ colored. The edges $v_iu_{(i+1)/2}$ and $v_{i+2}u_{(i+1)/2}$ have colors $c_3$ and $c_1$, for each odd $i$, $1\leq i\leq r-2.$ 
So, we obtain a $c_1$-$c_3$ colored connected subgraph induced by the vertices $v_i$ and $u_{(i+1)/2}$, for odd $i$, $1\leq i\leq r-2.$ Some examples of this bicolored subgraph neighboring $C$ are shown in Figure~\ref{fig:bdry} with dashed edges. 
\end{proof}

Lemma~\ref{lem-case1} below is used to show  that Theorem~\ref{thm_sk} holds in case there is  a peripheral bicolored component in a 3-coloring of $P_{k-2}\square P_{k-2}$, $k\geq 5$. 

\begin{lemma}\label{lem-case1}
    For any $k\geq 5$, if a 3-coloring of $P_{k-2}\square P_{k-2}$ has a peripheral bicolored component, then there is a bicolored $P_k$. 
\end{lemma}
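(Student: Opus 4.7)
The plan is to work with a peripheral bicolored component $C$, which up to symmetry I take to connect the top and bottom sides of the grid and use colors red ($c_1$) and blue ($c_2$); the left-right case is identical. The aim is to exhibit a bicolored $P_k$ somewhere in the $3$-coloring.

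I would begin by picking $a = (1, j_a), b = (k-2, j_b) \in C$ maximizing the column offset $|j_a - j_b|$. Since $C$ is connected and spans row $1$ and row $k-2$, it contains a simple $a$–$b$ path; any such path uses at least $k-3$ vertical edges and $|j_a - j_b|$ horizontal edges, hence carries at least $k-2 + |j_a - j_b|$ vertices. So if $|j_a - j_b| \geq 2$, the path itself is a bicolored $P_k$ inside $C$, and we are done.

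The substantive case is $|j_a - j_b| \leq 1$, when the shortest top–bottom path in $C$ is short by one or two vertices. Here I would apply Lemma \ref{lem-angle} to a partial walk $B_C = v_1, \ldots, v_r$ of $C$ running from a top vertex $a$ down to a bottom vertex $b$ along one flank of $C$. Parts (2)–(4) constrain the zigzag pattern of $90^\circ$ turns along $B_C$; part (5) then produces a red–green connected shadow subgraph $D$ containing the odd-indexed $v_i$ together with the corner green vertices $u_{(i+1)/2}$. Using the forced $90^\circ$ turn at $v_{r-1}$ I would argue that one of the $u$'s (or an adjacent forced red/blue vertex) sits in or near the bottom row of the grid, and symmetrically near the top. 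I would then split into two subcases: either (i) a green $u$ adjacent to $a$ or $b$ forces, via the no-two-adjacent-greens rule, an additional red or blue vertex of $C$ extending the short path by one vertex at each end, yielding a bicolored $P_k$ inside $C$; or (ii) the shadow subgraph $D$ is itself peripheral top–bottom (because it inherits row-$1$ and row-$(k-2)$ vertices from $B_C$), so that re-running the maximum-column-offset argument inside $D$ gives a red–green $P_k$ inside $D$.

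The main obstacle is the narrowest subcase $|j_a - j_b| = 0$: the column-aligned path in $C$ is only a $P_{k-2}$ and two extra vertices must be produced. The delicate step is the combinatorial bookkeeping of how green vertices can sit in the two columns immediately adjacent to the $C$-column; the proper-coloring constraint that forbids two adjacent green vertices, combined with the odd-parity of $r$ and the precise boundary-angle structure from Lemma \ref{lem-angle}, is needed to locate two extra vertices (either inside $C$ at the top and bottom of the column, or inside the shadow $D$ via its green corner vertices) that complete the bicolored $P_k$.
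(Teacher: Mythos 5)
The reduction in your first two paragraphs is sound and matches the paper's opening move: a top--bottom bicolored path has at least $k-3$ vertical edges, so two or more horizontal edges (offset $\geq 2$) already give $k$ vertices, and the only problematic situations are paths with $k-2$ or $k-1$ vertices confined to one or two columns. But that narrow case is not a residual technicality -- it is the entire content of the lemma, and your proposal does not actually carry it out. The paper resolves it by taking $P$ \emph{maximal} as a bicolored path (you never invoke maximality, only properness), which forces the colors of whole neighboring columns (its Remark on $v_s,x_s,w_s,y_s$: the columns adjacent to $P$ decompose into red--green and $\alpha$--green paths), and then runs a case analysis on the parities of the jump row $i$ and of $k$ and on whether the adjacent columns exist, locating the bicolored $P_k$ sometimes inside the red--blue component but often in a red--green or blue--green subgraph spanning two neighboring columns. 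Your subcase (i) assumes that a green vertex next to an endpoint forces an extra red/blue vertex of $C$ that \emph{extends} the short path at each end; that is exactly what fails in several parity configurations (the forced neighbor can have the wrong color to alternate, or the extension collides with the path), which is why the paper's argument branches the way it does and why in one subcase ($j=k-3$) it must fall back on Lemma \ref{lem-angle} to derive a parity contradiction rather than an extension.

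Your subcase (ii) has a second, structural gap: if the shadow component $D$ is peripheral but its own top/bottom column offset is again $\leq 1$, ``re-running the maximum-column-offset argument'' returns you to the same narrow case, so the step is circular unless you supply a progress/termination argument. The paper deliberately avoids this: iteration over shadow components appears only in Claim \ref{clm-fin} for \emph{partial} components (with an explicit area/position argument for termination), and that claim is only useful because Lemma \ref{lem-case1} has already been proved by the direct color-forcing analysis. So as written, the proposal identifies the right easy case but leaves the essential case ($|j_a-j_b|\leq 1$, in particular the single-column $P_{k-2}$ situation you yourself flag as the ``main obstacle'') unproved.
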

\begin{proof}
If the 3-coloring of $P_{k-2}\square P_{k-2}$ has a peripheral bicolored component, then let $P$ be a maximal bicolored path connecting two opposite sides, w.l.o.g. top and bottom sides, of the grid. 
We label the columns of the grid as $C_1,\dots,$ $C_{k-2}$ from left to right. Let $C_j$ be the leftmost column containing vertices from $P$. 
If they exist, we label the vertex set in $C_{j-1}, C_j, C_{j+1}, C_{j+2}$ as $v_s, w_s, x_s, y_s,$ $1\leq s\leq k-2$, respectively, $s$ indicating the row index with $s=1$ being the top row. Note that, it is possible that some of these columns do not exist depending on whether $P$ has edges on the right or left side of the grid. We present below a case analysis considering these possibilities as well and find a bicolored $P_k$ in each case. There can be at most one adjacent pair of vertices from different columns (jump across columns) in {\it P}, otherwise we have a bicolored $P_k.$ Thus, $P$ has vertices in at most two columns. Assume, w.l.o.g., that $P$ is a red-blue colored path and $w_1$ has color red in the rest of the proof.  

\noindent
{\it Case 1.} $|V(P)|=k-2.$ 

The only possibility of this case is that $P$ contains only vertices from a single column (only $C_j$), i.e. $V(P)=\{w_1,w_2, \dots, w_{k-2}\}.$ Assuming that $C_{j+1}$ exists, $x_1$ has color green by the maximality of $P$. 
Since $w_2$ has color blue, $x_2$ has color red for a proper coloring. Also, the remaining vertices on $C_{j+1}$ have only red and green colors by maximality of $P$. Hence, the vertices on $C_{j+1}$ together with $w_1$ induce a red-green colored $P_{k-1}$. If $C_{j+1}$ does not exist and $P$ is the right side of the grid, then the same arguments yield a red-green colored $P_{k-1}$ similarly using the vertices on $C_{j-1}$ and $w_1.$ This contradicts the maximality of $P.$ 

\noindent
{\it Case 2.} $|V(P)|=k-1.$ 

By symmetry, we assume w.l.o.g. that the jump of $P$ (across columns) is from left to right, specifically from $C_j$ to $C_{j+1}$ with $j\geq 2$ and at the $i$'th row with $2\leq i\leq k-2.$ Thus, $P$ is $w_1, \dots, w_i, x_i, \dots, x_{k-2}.$ 
Note that $C_{j-1}$ exists since $j\geq 2$, and that $w_{i-1}$ and $x_{i-1}$ exist since $i\geq 2.$ By arguments as in Case 1, $v_1$ is (colored) green, $v_2$ is red, $v_3$ is green,..., $v_i$ is red or green (depending on the parity of $i$). The following can be proved similarly. 

\begin{claim}\label{rem:nbrs}
$v_1,\dots,v_i$ and $x_1,\dots,x_{i-1}$ are green-red colored paths with $v_1$ and $x_1$ colored green. If $\alpha$ is the color of $x_{k-2}$, then $w_{k-2},\dots,w_{i+1}$ and $y_{k-2},\dots,y_i$ (if it exists) are green-$\alpha$ colored paths with $w_{k-2}$ and $y_{k-2}$ colored green. 
\end{claim} 

If $k$ is even and $i=k-2$, then $v_1,w_1,x_1,\dots,x_{k-2}$ is a green-red colored path by Claim~\ref{rem:nbrs}, contradicting the maximality of $P.$ If $k$ is even, $i<k-2$ and $i$ is even, then $w_{i+1}$ is colored red, which means $w_1,\dots,w_{i+1},x_{i+1},\dots,x_{k-2}$ is also a red-blue colored path. Thus, if $k$ is even and $i<k-2$, we may assume that $i$ is odd. If $k$ is even, $i$ is odd, and $i<k-2$, then $w_1,v_1,\dots,v_i,w_i,\dots,w_{k-2}$ is a red-green colored path by Claim~\ref{rem:nbrs}, contradicting the maximality of $P.$ Thus, we have the following. 
\begin{claim}\label{clm:k_odd}
$k$ is odd.
\end{claim}
If $i$ is odd (i.e., $k$ and $i$ are both odd), then by Claim~\ref{rem:nbrs}, $x_{i-1}$ is colored red and $w_{i+1}$ (if exists) is colored blue, and therefore the path $w_1\dots,w_{i-1},x_{i-1},x_i,w_i,w_{i+1},x_{i+1},\dots,x_{k-2}$ (or the part of it that exists) is a red-blue colored path, contradicting the maximality of $P.$ Thus, we have the following. 
\begin{claim}\label{clm:i_even}
    $i$ is even (which also means that $i<k-2$).
\end{claim}
To complete the proof, it suffices to produce a contradiction assuming that $k$ is odd and $i$ is even. 
If $j\leq k-4$, then $C_{j+2}$ exists and by Claim~\ref{rem:nbrs}, the path $x_i, \dots, x_1, w_1$, $v_1,\dots,v_i$ is a red-green colored $P_{2i+1}$ and 
the path $w_i, \dots, w_{k-2}, x_{k-2}, y_i, \dots, y_{k-2}$ 
is a blue-green colored $P_{2(k-i-1)+1}$. One of these bicolored paths has at least $k$ vertices, depending on the value of $i$.

If $j=k-3$, then $C_{j+2}$ does not exist. In this case, it is not guaranteed to have a bicolored $P_k$ only using vertices in $P$ and their neighbors.  So, we use Lemma~\ref{lem-angle} to show the existence of a bicolored $P_k$.  Let $C$ be the bicolored red-blue component containing $P.$ If $C$ contains any vertex from the top or bottom sides outside $P,$ then $C$ contains a red-blue $P_k.$ If $C$ contains any vertex from the left side, then the red-blue path in $C$ connecting $P$ to the left side together with $P$ yield a bicolored path with at least $k$ vertices. Thus, we assume that $C$ is not connected to any side of the grid (other than the right side) outside $P$. This implies that there exists a partial walk of $C$ from $x_{k-2}$ to $w_1$. However, (4) of Lemma~\ref{lem-angle} yields that $x_{k-2}$ and $w_1$ have the same color, a contradiction. 
\end{proof}

\subsection*{ Proof of Theorem~\ref{thm_sk}}
We know that $s_k(P_m\square P_n)\leq 4$ as $s_5(P_m\square P_n)=4$, for any $m,n\geq 3$, $k\geq 5$ \cite{kirticsouglu2024coloring}. 
To prove that $s_k(P_m\square P_n)\geq 4$, 
it suffices to show that $s_k(P_{k-2}\square P_{k-2})\geq 4.$ 
We consider any 3-coloring of $P_{k-2}\square P_{k-2}$ for $k\geq 5$ and show using Claim~\ref{clm-fin} that it has a peripheral bicolored component. Hence, there is a bicolored $P_k$ by Lemma~\ref{lem-case1}. 

Let $P_{k-2}\square P_{k-2}$ be colored with colors $\{c_1,c_2,c_3\}$ and 
assume that such a 3-coloring does not have a peripheral bicolored component. 
Let $C$ be a partial, $c_1$-$c_2$ colored component containing vertices from the top side and possibly left side of the grid. 
Let $B_C= (v_1, v_2, \dots, v_r)$ be the partial walk of $C$, where $v_1$ has color $c_1$.  
We need the following definitions associated with $C$:
\begin{itemize}
    \item $D_C$: By Lemma~\ref{lem-angle}, there is a $c_1$-$c_3$ colored connected subgraph, 
    induced by $v_i$ and $u_{(i+1)/2}$, for odd $i$, $1\leq i\leq r-2,$ where $u_{(i+1)/2}\not \in C$ is the $c_3$-colored vertex on the $C_4$ containing $\{v_i, v_{i+1},v_{i+2}\}.$ Let $D_C$ be the bicolored component containing this $c_1$-$c_3$ colored connected subgraph.
    \item  $\mathcal{W}_C$: Let this be the collection of $C_4$'s induced by $\{u_{(i+1)/2}, v_i, v_{i+1},v_{i+2}\}$, for odd $i$, $1\leq i\leq r-2$. As $r\geq 3$, $\mathcal{W}_C\neq \emptyset$. 
    \item $R_C$: This is the region of the grid enclosed by $B_C$ and the walk along the four sides of the grid in clockwise direction starting at $v_1$ ending at $v_r$. In other words, it is the region to the left when walking on $B_C$ from $v_1$ to $v_r$ that is enclosed by the four sides of the grid. 
\end{itemize}

\begin{claim}\label{clm-fin}
    When $C$ is replaced with $D_C$ iteratively, $D_C$ will be a peripheral bicolored component after a finite number of iterations.  
\end{claim}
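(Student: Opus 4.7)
The plan is to argue by finite descent on the size of~$R_C$. I will show that whenever $D_C$ is partial rather than peripheral, the replacement $C\leftarrow D_C$ yields $R_{D_C}\subsetneq R_C$. Since $|R_C|$ is a nonnegative integer bounded above by $(k-2)^2$, strict descent must halt after finitely many steps, and the only way to halt is at a peripheral component, which is exactly the claim.

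First I would verify that the iteration is well-defined on $D_C$. By Lemma~\ref{lem-angle}(5), $v_1\in D_C$, and by the definition of $B_C$, $v_1$ lies on the top side of the grid. Hence $D_C$ always touches the top side; either it also touches the bottom or right side, making $D_C$ peripheral (and we stop), or $D_C$ is itself a partial (Type~1 or~2) bicolored component. In the latter case $B_{D_C}$, $\mathcal{B}_{D_C}$, $R_{D_C}$, and $D_{D_C}$ are defined analogously, now using the color pair $(c_1,c_3)$ in place of $(c_1,c_2)$.

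The heart of the argument is the strict containment $R_{D_C}\subsetneq R_C$ in the partial case. Lemma~\ref{lem-angle}(3) places the $90^\circ$ angle at each $v_{i+1}$ (odd $i$) on the $R_C$-side of $B_C$; equivalently, $B_C$ makes a left turn at $v_{i+1}$ from the walker's perspective, so the grid cell enclosed by the four-cycle $\{v_i,v_{i+1},v_{i+2},u_{(i+1)/2}\}\in\mathcal{B}_C$ lies in $R_C$ and $u_{(i+1)/2}\in R_C$. Consequently $D_C$ necessarily extends into~$R_C$. Combined with the fact that $D_C$ meets $C$ only in the $c_1$-colored vertices of $C$ (so $D_C$ cannot cross $B_C$ into the opposite region except through those shared vertices), one obtains that the new walk $B_{D_C}$ lies weakly inside $R_C\cup B_C$ and dips strictly deeper than~$B_C$ around each~$u_{(i+1)/2}$. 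Hence the enclosed region $R_{D_C}$ is strictly contained in $R_C$.

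I expect the main obstacle to be the topological bookkeeping justifying that last step. Several points need care: the right neighbor of $v_1$ on the top side, if it exists, is $c_3$-colored and therefore belongs to $D_C$, so the new starting vertex $v_1'$ of $B_{D_C}$ typically lies strictly to the right of $v_1$; in the Type~2 case, $D_C$ may similarly extend down the left side past~$v_r$; and the parity of the new length $r'$, as forced by Lemma~\ref{lem-angle}(4), must be reconciled with the new geometry. A careful case analysis on the Type of $D_C$ should yield $R_{D_C}\subsetneq R_C$, and finite descent then concludes the proof.
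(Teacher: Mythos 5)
Your core mechanism is the same one the paper uses: the squares of $\mathcal{B}_C$ lie in $R_C$ and are missing from the region associated with the next component, so the region strictly shrinks and a finite‑descent argument terminates (the paper packages this a little differently, tracking how $v_1$ and $v_r$ march toward the corners and using the nesting of regions only to bound how often they can stall, but the monovariant is the same). However, there is a genuine error in your stopping condition: you declare $D_C$ peripheral as soon as it ``touches the bottom or right side.'' Peripheral means meeting two \emph{opposite} sides; a component meeting the top and right sides --- which can certainly occur, e.g.\ $D_C$ may run from $v_1$ rightwards to the right side of the grid without reaching the bottom or the left side --- is still a partial component, and stopping there proves nothing: Claim~\ref{clm-fin} must deliver a peripheral component so that Lemma~\ref{lem-case1} applies. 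The paper handles exactly this situation in its case (2) by reflecting the grid, so that the component again meets the top and left sides, and continuing the iteration, noting that such a reflection can occur at most once.

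This omission is not cosmetic for your argument, because the reflection interacts with your monovariant: $R_{D_C}$ is defined relative to the convention that the partial walk starts at the rightmost top vertex, so after a flip the new region is not literally nested inside the old one without further justification; one must either verify the nesting across the flip or argue, as the paper does, that the flip happens at most once and run the descent before and after it. A secondary point: your assertion that $B_{D_C}$ lies weakly inside $R_C\cup B_C$ is stated too strongly, since $D_C$ may pass through $c_1$-colored vertices of $C$ that are not on $B_C$ and pick up vertices on the far side of $B_C$ (for instance in a region enclosed between $C$ and the grid's sides); the weaker fact you actually need, namely $R_{D_C}\subseteq R_C$ with the $\mathcal{B}_C$-squares removed, is the same assertion the paper makes (with comparable brevity) and can be salvaged. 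The unhandled top-and-right case is the real gap to repair.
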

\begin{proof}
By definition and by Lemma~\ref{lem-angle}, at each iteration, $D_C$ contains the vertices $v_1$ and $v_r$ of $B_C=(v_1,\dots,v_r)$, where $v_1$ is on the top side and $v_r$ is either on the top side or left side of the grid. Let $B_C'=(v_1',\dots,v_r')$ be the partial walk of $D_C$. Assuming $D_C$ is not peripheral, possible scenarios at each iteration are:
\begin{enumerate}
    \item $C$ and $D_C$ have some vertices only from the top side and no other side of the grid. Thus, $v_1$, $v_1'$, $v_r$, $v_r'$ are on the top side of the grid. 
    \item $C$ has some vertices only from the top side and no other side of the grid, whereas $D_C$ has vertices from the top and one of the right or left sides of the grid. In case $D_C$ has some vertex on the right side, we flip the grid to exchange the right and left sides of the grid so that  $D_C$ satisfies our assumptions. Thus, $v_1$, $v_1'$ are on the top side, $v_r$ is on the top side, $v_r'$ is on the left side.  
    \item Both $C$ and $D_C$ have some vertices from the top and left side of the grid. Thus, $v_1$, $v_1'$ are  on the top side of the grid, $v_r$, and $v_r'$ are on the left side of the grid.
\end{enumerate} 

\begin{figure}[htbp]
    \centering
    \vspace{-0.4cm}
    \includegraphics[width=0.5\linewidth]{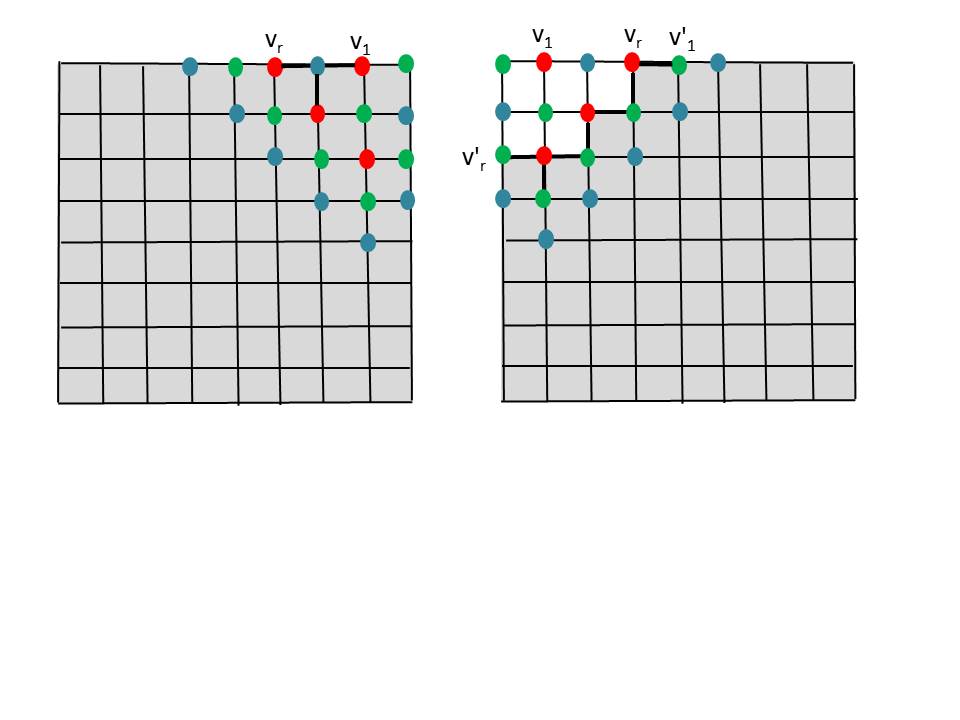}
    \vspace{-2.6cm}
    \caption{An example of Scenario (2), with the left and right grids showing the current $C$ before and after the iteration (and the flip), respectively. $\mathcal{R}_C$ is shown with the shaded region and $B_C$ is marked with bold edges. }
    \label{fig:main}
\end{figure}
In Scenario except (2), the grid is not replaced with any of its automorphisms and 
the sides of the grid to which the start and end vertices of $B_C$ belong do not change at any iteration. 
Note that an iteration in Scenario (2) may occur at most once. In all iterations, except possibly one with Scenario (2), we observe that $v_1'$ is to the right of $v_1$ on the top side of the grid or $v_1=v_1'$. Similarly, in all iterations in Scenario (1), $v_r'$ is to the left of $v_r$ on the top side of the grid or $v_r=v_r'$, and in Scenario (3), $v_r'$ is below $v_r$ on the left side of the grid or $v_r=v_r'$.   

Note that, after an iteration, the squares enclosed by the $C_4$'s in $\mathcal{W}_C$ are missing in $R_C$. As $\mathcal{W}_C\neq \emptyset$, 
$R_C$ becomes smaller after each iteration with Scenario (1)-(3). Thus, 
as long as $C$ is not peripheral, $v_1=v_1'$ or $v_r=v_r'$ can happen at most once for each vertex, as there are exactly two $C_4$'s that contain $v_1$ (same for $v_r$) and no $C_4$ belongs to $R_C$ in more than one iteration. In that sense, each vertex plays the role of $v_1$ or $v_r$ with multiplicity at most two throughout all iterations. Consequently, at each consecutive pair of iterations, the position of $v_1$ approaches the top right corner by at least one edge, and the position of $v_r$ approaches the top left or the bottom left corner of the grid  by at least one edge, depending on the case. These observations show that after some finite number of iterations, none of the Scenarios (1)-(3) will hold and $D_C$ will become a peripheral bicolored component.    
\end{proof}
 


\section*{Acknowledgements}

The authors would like to thank the referees for
their valuable comments that improved the quality of this paper. 

\bibliographystyle{plain}

\bibliography{arxiv_submission2025}

\end{document}